\newtheorem{theorem}{Theorem}
\theoremstyle{plain}
\newtheorem{lemma}{Lemma}
\newtheorem{proposition}{Proposition}
\numberwithin{equation}{section}
\begin{document}
\title[Lie Groupoids and Generalized Almost Subtangent Manifolds]{Lie Groupoids and Generalized Almost Subtangent Manifolds}
\author{Fulya Sahin}
\subjclass[2000]{22A22, 53D17, 53D18} \keywords{Lie groupoid,
Symplectic Groupoid, Generalized Almost Subtangent Manifold }

\begin{abstract}
In this paper, we show that there is a close relationship between
generalized subtangent manifolds and Lie groupoids. We obtain
equivalent assertions among the integrability conditions of
generalized almost subtangent manifolds, the condition of
compatibility of source and target maps of symplectic groupoids with
symplectic form and generalized subtangent maps.
\end{abstract}

\maketitle

\section{Introduction}

 A groupoid is a small category in which all morphisms are invertible. More precisely, a groupoid $G$ consists of two sets $G_{1}$ and $G_{0}$, called arrows
 and the objects, respectively, with maps
 $s,t:G_{1}\rightarrow G_{0}$ called source and target. It is
 equipped with a composition $m:G_{2}\rightarrow G_{1}$
 defined on the subset $G_{2}=\{(g,h)\in G_{1}\times
 G_{1}|s(g)=t(h)\}$; an inclusion map of objects
 $e:G_{0}\rightarrow G_{1}$ and an inversion map $i:G_{1}\rightarrow G_{1}$.
 For a groupoid, the following properties are satisfied:
 $s(gh)=s(h)$, $t(gh)=t(g)$, $s(g^{-1})=t(g)$,
 $t(g^{-1})=s(g)$, $g(hf)=(gh)f$ whenever both sides are
 defined, $g^{-1}g=1_{s(g)}$, $gg^{-1}=1_{t(g)}$. Here we have
 used $gh, 1_{x}$ and $g^{-1}$ instead of $m(g,h)$, $e(x)$ and
 $i(g)$. Generally, a groupoid $G$ is denoted by the set of arrows
 $G_{1}$.

 A topological groupoid is a groupoid $G_{1}$ whose set of arrows
 and set of objects are both topological spaces whose structure maps
 $s, t, e, i,m$ are all continuous and $s, t$ are
 open maps.

 A Lie groupoid is a groupoid $G$ whose set of arrows and set of
 objects are both manifolds whose structure maps $s, t,
 e, i, m$ are all smooth maps and $s, t$ are submersions.
 The latter condition ensures that $s$ and $t$-fibres are
 manifolds. One can see from above definition the space $G_{2}$ of
 composable arrows is a submanifold of $G_{1}\times G_{1}$. We note
 that Lie groupoid introduced by Ehresmann \cite{ehr}.

 On the other hand, Lie algebroids were first introduced by
 Pradines \cite{prad} as infinitesimal objects associated with the Lie
 grou- poids. More precisely, a Lie algebroid structure on a real
 vector  bundle $A$ on a manifold $M$ is defined by a vector bundle
 map $\rho_{A}:A\rightarrow TM$, the anchor of $A$, and an
 $\mathbb{R}$-Lie algebra bracket on $\Gamma(A), [,]_{A}$ satisfying
 the Leibnitz rule
 \begin{center}
 $[\alpha,f \beta]_{A}=f[\alpha,\beta]_{A}+L_{\rho_{A}(\alpha)}(f)\beta$
 \end{center}
 for all $\alpha,\beta \in \Gamma(A), f \in C^{\infty}(M)$, where $L_{\rho_{A}(\alpha)}$ is
 the Lie derivative with respect to the vector field
 $\rho_{A}(\alpha)$. And $\Gamma(A)$ denotes the set of sections in
 $A$.

 On the other hand, Hitchin \cite{hitc} introduced the notion of generalized
 complex manifolds by unifying and extending the usual notions of
 complex and symplectic manifolds. Later the notion of generalized
 K\"{a}hler manifold was introduced by Gualtieri \cite{gualt} and submanifolds
 of such manifolds have been studied in many papers.

 As an analogue of generalized complex structures on even dimensional
 manifolds, the concept of  generalized almost subtangent manifolds
 were introduced in \cite{vaisman} and such manifolds have been
 studied in \cite{wade1} and \cite{vaisman}.

 Recently, Crainic \cite{crainic1} showed that there is a close relationship between the
 equations of a generalized complex manifold and a Lie groupoid. More precisely, he obtained that the complicated
 equations of such manifolds turn into simple structures for
 Lie groupoids.\\

 In this paper, we investigate relationships between the complicated
 equations of generalized subtangent structures and Lie groupoids.
 We showed that the equations of such manifolds are useful to obtain
 equivalent results on a symplectic groupoid.

\section{Preliminaries}

In this section we recall basic facts of Poisson geometry, Lie
groupoids and Lie algebroids. More details can be found in
\cite{mac1} and \cite{vaisman1}. A central idea in generalized
geometry is that $TM\oplus T^{\ast}M$ should be thought of as a
generalized tangent bundle to manifold $M$. If $X$ and $\xi$ denote
a vector field and  a dual vector field on $M$ respectively, then we
write $(X,\xi)$ (or $X+\xi$) as a typical element of $TM\oplus
T^{\ast}M$. The Courant bracket of two sections $(X,\xi),(Y,\eta)$
of $TM\oplus T^{\ast}M=\mathcal{TM}$ is defined by
\begin{eqnarray}
[(X,\xi),(Y,\eta)]&=&[X,Y]+L_{X}\eta-L_{Y}\xi\nonumber\\
& &\ -\frac{1}{2}d(i_{X}\eta-i_{Y}\xi),\label{eq:2.1}
\end{eqnarray}
where $d$, $L_{X}$ and $i_{X}$ denote exterior derivative, Lie
derivative and interior derivative with respect to $X$,
respectively. The Courant bracket is antisymmetric but, it does not
satisfy the Jacobi identity. We adapt the notions
$\beta(\pi^{\sharp}\alpha)=\pi(\alpha,\beta)$ and
$\omega_{\sharp}(X)(Y)=\omega(X,Y)$ which are defined as
$\pi^{\sharp}:T^{\ast}M \rightarrow TM$,
$\omega_{\sharp}:TM\rightarrow T^{\ast}M$  for any 1-forms $\alpha$
and $\beta$, 2-form $\omega$ and bivector field $\pi$, and vector
fields $X$ and $Y$. Also we denote by $[,]_{\pi}$, the bracket on
the space of 1-forms on $M$ defined by
\begin{eqnarray}
[\alpha,\beta]_{\pi}=L_{\pi^{\sharp}\alpha}\beta-L_{\pi^{\sharp}\beta}\alpha-d\pi(\alpha,\beta).\nonumber
\end{eqnarray}

On the other hand, a symplectic manifold is a smooth (even
dimensional) manifold $M$ with a non-degenerate closed 2-form
$\omega\in\Omega^{2}(M)$. $\omega$ is called the symplectic form of
$M$. Let $G$ be a Lie groupoid on $M$ and $\omega$ a form on Lie
groupoid $G$, then $\omega$ is called multiplicative if
\begin{eqnarray}
m^{\ast}\omega=pr_{1}^{\ast}\omega+pr_{2}^{\ast}\omega, \nonumber
\end{eqnarray}
where $pr_{i}:G\times G\rightarrow G$, $i=1,2$, are the canonical
projections.\\

We now recall the notion of Poisson manifolds. A Poisson manifold is
a smooth manifold $M$ whose function  space
$C^{\infty}(M,\mathbb{R})$, is a Lie algebra with bracket $\{,\}$,
such that the following properties are satisfied;
\begin{enumerate}
\item[(i)]$ \{f,g\}= -\{g,f\}$
\item[(ii)]$\{f,\{g,h\}\}+\{g,\{h,f\}\}+\{h,\{f,g\}\}=0$
\item[(iii)]$\{fg,h\}=f\{g,h\}+g\{f,h\}$.
\end{enumerate}
If $M$ is a Poisson manifold, then there is a unique bivector $\pi$,
called the Poisson bivector, and a unique homomorfizm
$\pi^{\sharp}:T^{\ast}M\rightarrow TM$ of vector bundles with
$\pi^{\sharp}(T^{\ast}M)\subset TM$ such that
$\pi(df,dg)=\pi^{\sharp}(df)g=\{f,g\}$.

It is also possible to define a Poisson manifold by using the
bivector $\pi$. Indeed, a smooth manifold is a Poisson manifold if
$[\pi,\pi]=0$, where $[,]$ denotes the Schouten bracket on the space
of multivector fields.

We now give a relation between Lie algebroid and Lie groupoi- d.
Given a Lie groupoid $G$ on $M$, the associated Lie algebroid
$A=Lie(G)$ has fibres $A_{x}=Ker(ds)_{x}=T_{x}(G(-,x))$, for any
$x\in M$. Any $\alpha \in \Gamma(A)$ extends to a unique
right-invariant vector field on $G$, which will be denoted by same
letter $\alpha$. The usual Lie bracket on vector fields induces the
bracket on $\Gamma(A)$, and the anchor is given by
$\rho=dt:A\rightarrow TM$.

Given a Lie algebroid $A$, an integration of $A$ is a Lie groupoid
$G$ together with an isomorphism $A\cong Lie(G)$. If  such a $G$
exists, then it is called that $A$ is integrable. In contrast with
the  case of Lie algebras, not every Lie algebroid admits an
integration. However if a Lie algebroid is integrable, then there
exists a canonical source simply connected integration $G$, and any
other source simply connected integration is smoothly isomorphic to
$G$. \textit{From now on we assume that all Lie groupoids are
source-simply-connected.}\\

In this section, finally, we recall the notion of $IM$ form
(infinitesimal multiplicative form) on a Lie algebroid
\cite{bursztyn}. More precisely, an $IM$ form on a Lie algebroid $A$
is a bundle map
\begin{eqnarray*}
u:A\rightarrow T^{\ast}M
\end{eqnarray*}
satisfying the following properties

\begin{enumerate}
\item[(i)]$\langle u(\alpha),\rho(\beta)\rangle=-\langle
u(\beta),\rho(\alpha)\rangle$
\item[(ii)] $u([\alpha,\beta])=L_{\alpha}(u(\beta))-L_{\beta}(u(\alpha))$\\
  $+d\langle u(\alpha),\rho(\beta)\rangle$
\end{enumerate}

for $\alpha,\beta \in \Gamma(A)$, where $\langle,\rangle$ denotes
the usual pairing between a vector space and its dual.

If $A$ is a Lie algebroid of a Lie groupoid $G$, then a closed
multiplicative 2-form $\omega$ on $G$ induces an $IM$ form
$u_{\omega}$ of $A$ by
\begin{eqnarray}
\langle u_{\omega}(\alpha),X \rangle=\omega(\alpha,X). \nonumber
\end{eqnarray}
For the relationship between $IM$ form and closed 2-form we have the
following.
\begin{theorem}
\cite{bursztyn}If $A$ is an integrable Lie algebroid and if $G$ is
its integration, then $\omega\mapsto u_{\omega}$ is an one to one
correspondence between closed multiplicative 2-forms on $G$ and IM
forms of $A$.
\end{theorem}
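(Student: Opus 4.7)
The plan is to establish the correspondence in three stages: well-definedness of the map $\omega \mapsto u_{\omega}$, injectivity, and surjectivity, where the third step is expected to be the principal obstacle.

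For \emph{well-definedness}, fix a closed multiplicative $\omega$ and interpret $\alpha,\beta \in \Gamma(A)$ as right-invariant vector fields on $G$ tangent to the $s$-fibres. Condition (i) of the IM form follows at once from antisymmetry of $\omega$ together with $\rho = dt$ evaluated along the unit section. To derive (ii), I would apply Cartan's formula to $d\omega(\alpha,\beta,\rho(\gamma)) = 0$ for a third section $\gamma \in \Gamma(A)$, use multiplicativity to rewrite the mixed brackets $[\alpha,\rho(\gamma)]$ in terms of data living along the image of $e$, and reorganise the resulting six terms into $u_{\omega}([\alpha,\beta]) - L_{\alpha}(u_{\omega}(\beta)) + L_{\beta}(u_{\omega}(\alpha)) - d\langle u_{\omega}(\alpha),\rho(\beta)\rangle$ paired with $\rho(\gamma)$.

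For \emph{injectivity}, I would assume $u_{\omega} = 0$, so that $\omega$ vanishes on pairs (right-invariant vector, arbitrary tangent vector) along the image of $e$. Combined with the symmetric statement obtained by composing with the inversion $i$ (which exchanges right- and left-invariant vectors), this kills $\omega$ completely along the unit section. Then the multiplicativity relation $m^{\ast}\omega = pr_{1}^{\ast}\omega + pr_{2}^{\ast}\omega$, applied at pairs $(g,1_{s(g)})$ and $(1_{t(g)},g)$, propagates the vanishing from units to every arrow.

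For \emph{surjectivity}, which is the main difficulty, I would reconstruct $\omega$ from a given IM form $u$ using the source-simply-connected hypothesis standing in force from Section~2. The natural route is the Crainic--Fernandes realisation of $G$ as the quotient of the $A$-path space $P(A)$ by $A$-homotopy: the IM form $u$ produces, via the tautological $A$-path, a closed 2-form on $P(A)$, and conditions (i) and (ii) are exactly what is needed to make this 2-form basic with respect to $A$-homotopy, hence to descend to a closed 2-form $\omega$ on $G$. Multiplicativity of $\omega$ follows from the concatenation compatibility built into $P(A)$, and the identity $u_{\omega} = u$ is recovered by restricting to the unit section. The technical core, and the expected obstacle, is verifying basicness: one must show that the Lie derivative of the path-space 2-form along every $A$-homotopy variation vanishes, and this is precisely where the full strength of the IM identity (ii) is consumed.
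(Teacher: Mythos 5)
The paper does not prove this theorem at all: it is imported verbatim from the reference \cite{bursztyn} (Bursztyn--Crainic--Weinstein--Zhu), so the only meaningful comparison is with the proof given there. Your three-stage outline --- well-definedness, injectivity, surjectivity via the $A$-path space $P(A)$ --- is in fact the architecture of that proof, so you have reconstructed the correct strategy rather than a genuinely different one.

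Two caveats keep this from being a proof. First, your claim that IM condition (i) ``follows at once from antisymmetry of $\omega$'' is not accurate: antisymmetry alone gives $\omega(\alpha,\rho(\beta))=-\omega(\rho(\beta),\alpha)$, which is not the assertion. One genuinely needs the multiplicativity identity evaluated at pairs of units, together with the splitting $T_{1_x}G\cong T_xM\oplus A_x$ and the explicit form of $dm$ there, to convert $\omega(\alpha,\,de(\rho(\beta)))$ into $-\omega(\beta,\,de(\rho(\alpha)))$; without multiplicativity the statement is false for a general closed 2-form on $G$. Second, and more seriously, the surjectivity step --- which you correctly identify as the crux --- is located but not carried out: the assertion that conditions (i) and (ii) are ``exactly what is needed'' for the tautological 2-form on $P(A)$ to be basic under $A$-homotopy is precisely the hard content of the cited paper, and your proposal gives no indication of the computation (the variation of the path-space form along an $A$-homotopy, and the cancellation forced by identity (ii)). As a roadmap the proposal is faithful to the known proof; as a proof it has a genuine gap at surjectivity, and a smaller but real gap in the derivation of condition (i).
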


If a Lie groupoid $G$ is endowed with a symplectic form which is
multiplicative, then $G$ is called  symplectic groupoid.

Similar to 2-forms,  given a Lie groupoid $G$, a (1,1)-tensor $J :
TG \rightarrow TG$ is called multiplicative \cite{crainic1} if for
any $(g, h)\in G\times G$ and any $v_{g}\in T_{g}G$, $w_{h}\in
T_{h}G$ such that $(v_{g},w_{h})$ is tangent to $G\times G$ at $(g,
h)$, so is $(Jv_{g}, Jw_{h})$, and
\begin{eqnarray*}
(dm)_{g,h}(Jv_{g}, Jw_{h}) = J((dm)_{g,h}(v_{g},w_{h} )).
\end{eqnarray*}
\section{Lie Groupoids and Generalized Subtangent Structures}

In this section we first give a characterization for generalized
subtangent manifolds, then we obtain certain relationships between
generalized subtangent manifolds and symplectic groupoi- ds. We
recall that a generalized almost subtangent structure $\mathcal{J}$
is an endomorphism on $\mathcal{TM}$ such that $\mathcal{J}^{2}= 0$.
A generalized almost subtangent structure can be represented by
classical tensor fields as follows:
\begin{eqnarray}
\mathcal{J}= \left[ {\begin{array}{cc}
 a & \pi^{\sharp}  \\ \label{eq:3.1}
 \sigma_{\sharp} & -a^\ast  \\
 \end{array} } \right]
\end{eqnarray}
where $\pi$ is a bivector on $M$, $\sigma$ is a 2-form on $M$, $a :
TM \rightarrow TM$ is a bundle map, and $a^{\ast} : T^{\ast}M
\rightarrow T^{\ast}M$ is dual of $a$, for almost subtangent
structures see:\cite{vaisman} and \cite{wade1}.

A  generalized almost  subtangent structure is called integrable (or
just subtangent structure) if $\mathcal{J}$ satisfies the following
condition
\begin{eqnarray}
[\mathcal{J}\alpha,\mathcal{J}\beta]-
\mathcal{J}([\mathcal{J}\alpha, \beta] + [\alpha,\mathcal{J}\beta])
= 0, \label{eq:3.2}
\end{eqnarray}
 for all sections $\alpha,\beta \in \mathcal{TM}$.\\

In the sequel, we give necessary and sufficient conditions for a
generalized almost subtangent structure to be integrable in terms of
the above tensor fields. We note that the following result was
stated in \cite{vaisman}, but its proof was not given in there. In
fact, the proof of the conditions of the following proposition is
similar to the proposition given in  \cite{crainic1} by Crainic for
generalized complex structures. Although the conditions are similar
to the generalized complex case, their proofs are slightly different
from the complex case. Therefore we give one part of the proof of
the following proposition.
\begin{proposition}A manifold with $\mathcal{J}$ given by
(\ref{eq:3.1}) is a generalized subtangent manifold if and only if
\begin{enumerate} \item [(S1)] $\pi$ satisfies the equation
\begin{equation}
\pi^{\sharp}([\xi,\eta]_{\pi})=[\pi^{\sharp}(\xi),\pi^{\sharp}(\eta)],
\nonumber
\end{equation}
\item [(S2)]$\pi$ and $a$ are related by the following two
formulas
\begin{eqnarray}
a\pi^{\sharp}&=&\pi^{\sharp}a^{\ast},\nonumber\\
a^{\ast}([\xi,\eta]_{\pi}) &=& L_{\pi^{\sharp}\xi}(a^{\ast}\eta) -
L_{\pi^{\sharp}\eta}(a^{\ast}\xi)\nonumber\\
& &\ -d\pi(a^{\ast}\xi,\eta),\label{eq:3.5}
\end{eqnarray}
\item [(S3)]$\pi,a$ and $\sigma$ are related by the following two
formulas
\begin{eqnarray}
a^{2} + \pi^{\sharp}\sigma_{\sharp} &=& 0,\label{eq:3.6}\\
N_{a}(X,Y)&=&\pi^{\sharp}(i_{X\wedge Y}d(\sigma)),\label{eq:3.7}
\end{eqnarray}
\item [(S4)]$\sigma$ and $a$ are related by the following two
formulas
\begin{eqnarray}
a^{\ast}\sigma_{\sharp} &=& \sigma_{\sharp}a, \nonumber\\
d\sigma_{a}(X,Y,Z) &=& d\sigma(aX,Y,Z) + d\sigma(X,aY,Z)\nonumber\\&
&\ + d\sigma(X, Y,aZ)\nonumber
\end{eqnarray}
for all 1-forms $\xi$ and $\eta$, and all vector fields $X, Y$ and
$Z$, where $\sigma_{a}(X,Y) = \sigma(aX,Y)$.
\end{enumerate}
\end{proposition}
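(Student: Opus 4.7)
The plan is to test the integrability condition (\ref{eq:3.2}) on pairs of sections of $\mathcal{TM}$ that are of pure type, i.e.\ either pure vector fields or pure $1$-forms, and then to separate the resulting equality into its $TM$-component and its $T^{\ast}M$-component. Since $\mathcal{J}(X+\xi) = (aX+\pi^{\sharp}\xi) + (\sigma_{\sharp}X - a^{\ast}\xi)$, each of the four blocks $a$, $\pi^{\sharp}$, $\sigma_{\sharp}$, $a^{\ast}$ enters the Courant bracket (\ref{eq:2.1}) in a controlled way, so the integrability condition decouples into the algebraic and differential identities (S1)--(S4). Bilinearity then yields the converse automatically.

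First, I would substitute $\alpha = \xi$, $\beta = \eta$ with $\xi,\eta \in \Gamma(T^{\ast}M)$, so that $\mathcal{J}\xi = \pi^{\sharp}\xi - a^{\ast}\xi$ and likewise for $\eta$. Expanding $[\mathcal{J}\xi,\mathcal{J}\eta]$ via (\ref{eq:2.1}), the vector-field part reduces, after using $[\alpha,\beta]_{\pi} = L_{\pi^{\sharp}\alpha}\beta - L_{\pi^{\sharp}\beta}\alpha - d\pi(\alpha,\beta)$, to the Jacobi-type identity (S1). The $1$-form part, after matching with the cotangent part of $\mathcal{J}([\mathcal{J}\xi,\eta]+[\xi,\mathcal{J}\eta])$, gives the second relation of (S2). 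Next, taking $\alpha = X$ (a vector field) and $\beta = \xi$ (a $1$-form), the purely algebraic matching of the $TM$ and $T^{\ast}M$ blocks on both sides forces the compatibility $a\pi^{\sharp} = \pi^{\sharp}a^{\ast}$ (the first equation of (S2)) together with $a^{\ast}\sigma_{\sharp} = \sigma_{\sharp}a$ (the first equation of (S4)). Finally, taking $\alpha = X$, $\beta = Y$ both vector fields, so $\mathcal{J}X = aX + \sigma_{\sharp}X$, the vector-field part of the expansion produces the algebraic identity $a^{2} + \pi^{\sharp}\sigma_{\sharp} = 0$ together with the Nijenhuis-type equation $N_{a}(X,Y) = \pi^{\sharp}(i_{X\wedge Y}d\sigma)$ of (S3), while the $1$-form part provides the $d\sigma_{a}$ identity of (S4).

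The main obstacle will be bookkeeping. The Courant bracket (\ref{eq:2.1}) mixes $[\cdot,\cdot]$, $L_{X}$, $i_{X}$ and $d$, and extracting clean $TM$-- and $T^{\ast}M$--components requires repeated application of Cartan's formula $L_{X} = i_{X}d + di_{X}$, of the duality pairing $\langle a^{\ast}\xi, X\rangle = \langle \xi, aX\rangle$, and of the identity $L_{\pi^{\sharp}\xi}\eta - L_{\pi^{\sharp}\eta}\xi = [\xi,\eta]_{\pi} + d\pi(\xi,\eta)$. The essential difference from the generalized complex case treated by Crainic \cite{crainic1} is that here $\mathcal{J}^{2}=0$ rather than $\mathcal{J}^{2}=-\mathrm{id}$, which forces $a^{2}+\pi^{\sharp}\sigma_{\sharp}=0$ in place of the usual $a^{2}+\pi^{\sharp}\sigma_{\sharp}=-\mathrm{id}$. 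This sign change cascades through all of the cross-terms and must be tracked carefully in every Cartan-calculus cancellation. Following the paper's remark, I would only write out in full the case $\alpha,\beta \in \Gamma(T^{\ast}M)$, which already exhibits all the essential features and delivers (S1) together with the second identity of (S2); the remaining cases proceed by entirely analogous, if lengthier, manipulations.
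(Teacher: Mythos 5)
Your strategy---evaluating the integrability condition (\ref{eq:3.2}) on the pure pairs $(\xi,\eta)$, $(X,\xi)$ and $(X,Y)$ and separating each resulting identity into its $TM$- and $T^{\ast}M$-components---is precisely the Crainic-style decomposition the paper points to, and it is sound; be aware, though, that despite announcing ``we give one part of the proof,'' the paper never actually writes any of it down (the statement is followed immediately by the definition of a Hitchin pair), so there is no argument of record to compare yours against. One point to tighten: the three purely algebraic identities $a\pi^{\sharp}=\pi^{\sharp}a^{\ast}$, $a^{\ast}\sigma_{\sharp}=\sigma_{\sharp}a$ and $a^{2}+\pi^{\sharp}\sigma_{\sharp}=0$ are exactly the block entries of $\mathcal{J}^{2}=0$, which is already part of what ``generalized almost subtangent'' means, so the cleanest course is to record them at the outset rather than extract them from (\ref{eq:3.2}). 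Your alternative of recovering them from the mixed and vector--vector cases does work, but not by ``purely algebraic matching'' of blocks: they arise as the obstruction to $C^{\infty}(M)$-linearity of the non-tensorial remainder---for instance, in the $(X,Y)$ case the vector component reduces to
\begin{equation*}
N_{a}(X,Y)-\pi^{\sharp}\bigl(i_{X\wedge Y}d\sigma\bigr)=\bigl(a^{2}+\pi^{\sharp}\sigma_{\sharp}\bigr)\bigl([X,Y]\bigr),
\end{equation*}
and one must observe that the left side is tensorial in $(X,Y)$ while the right side is not, forcing each to vanish separately. With that caveat, and provided you also check that the tensorial part of the mixed case yields nothing beyond (S1)--(S4), your sketch is correct.
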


As an analogue of a Hitchin pair on generalized complex manifold, a
Hitchin pair on a  generalized almost subtangent manifold $M$ is a
pair $(\omega, a)$ consisting of a symplectic form $\omega$ and a
(1,1)-tensor $a$ with the property that $\omega$ and $a$ commute
(i.e $\omega(X,aY)=\omega(aX,Y)$) and $d\omega_{a} = 0$, where
$\omega_{a}(X,Y)=\omega(aX,Y)$.
\begin{lemma}
If $\pi$ is a non-degenerate bivector on a generalized almost
subtangent manifold $M$, $\omega$ is the inverse 2-form (defined by
$\omega_{\sharp} = (\pi^{\sharp})^{-1}$) and $\pi$ satisfies
(\ref{eq:3.6}) then $\sigma=-a^{\ast}\omega$.
\end{lemma}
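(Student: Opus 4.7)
The plan is to extract two algebraic identities from the defining condition $\mathcal{J}^{2}=0$ of a generalized almost subtangent structure, and to combine them with the non-degeneracy of $\pi^{\sharp}$.

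First, I would expand the block matrix product of $\mathcal{J}$ with itself. Among the four block identities encoded in $\mathcal{J}^{2}=0$ are precisely (\ref{eq:3.6}), namely $a^{2}+\pi^{\sharp}\sigma_{\sharp}=0$, and the upper-right block identity $a\pi^{\sharp}=\pi^{\sharp}a^{\ast}$. I would record both at the outset; only the first is mentioned in the hypothesis, but the second comes for free from the standing assumption that $M$ is a generalized almost subtangent manifold.

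Second, since $\pi^{\sharp}$ is invertible with inverse $\omega_{\sharp}$, left-multiplying (\ref{eq:3.6}) by $\omega_{\sharp}$ solves for $\sigma_{\sharp}$:
\[
\sigma_{\sharp}=-\omega_{\sharp}\,a^{2},\qquad \text{equivalently,}\qquad \sigma(X,Y)=-\omega(a^{2}X,Y)
\]
for arbitrary vector fields $X,Y$. Composing the auxiliary identity $a\pi^{\sharp}=\pi^{\sharp}a^{\ast}$ with $\omega_{\sharp}=(\pi^{\sharp})^{-1}$ on both sides yields $\omega_{\sharp}a=a^{\ast}\omega_{\sharp}$, which in pointwise notation is the symmetry $\omega(aX,Y)=\omega(X,aY)$. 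Applying this symmetry with $aX$ in place of $X$ gives $\omega(a^{2}X,Y)=\omega(aX,aY)$, so that
\[
\sigma(X,Y)=-\omega(a^{2}X,Y)=-\omega(aX,aY)=-(a^{\ast}\omega)(X,Y),
\]
where $a^{\ast}\omega$ denotes the pullback $2$-form. Therefore $\sigma=-a^{\ast}\omega$.

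The argument reduces, once the ingredients are collected, to a short direct computation, so no serious technical obstacle is anticipated. The one subtlety worth flagging is that the lemma's hypothesis refers only to (\ref{eq:3.6}); the intertwining relation $a\pi^{\sharp}=\pi^{\sharp}a^{\ast}$, which is what lets us rewrite $\omega(a^{2}X,Y)$ in the cleaner form $\omega(aX,aY)$, is not part of the stated hypothesis but is supplied automatically by $\mathcal{J}^{2}=0$. Being explicit about this extraction is the main thing to get right.
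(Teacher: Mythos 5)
Your proposal is correct and follows essentially the same route as the paper: apply $\omega_{\sharp}=(\pi^{\sharp})^{-1}$ to (\ref{eq:3.6}) to solve for $\sigma_{\sharp}$, then use the commutation of $a$ with $\omega$ to rewrite $-\omega(a^{2}X,Y)$ as $-\omega(aX,aY)=-(a^{\ast}\omega)(X,Y)$. The only difference is that you explicitly justify the commutation relation $\omega_{\sharp}a=a^{\ast}\omega_{\sharp}$ by extracting $a\pi^{\sharp}=\pi^{\sharp}a^{\ast}$ from the block identity $\mathcal{J}^{2}=0$, whereas the paper simply asserts that $\omega$ and $a$ commute; this is a small but genuine improvement in rigor, not a different proof.
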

\begin{proof}
For $X\in\chi(M)$, we apply $\omega_{\sharp}$ to (\ref{eq:3.6}) and
using the dual subtangent structure $a^{\ast}$, we have
\begin{eqnarray}
(a^{\ast})^{2}\omega_{\sharp}(X)+\sigma_{\sharp}(X)=0.\nonumber
\end{eqnarray}
Now for $Y\in\chi(M)$, since $\omega$ and $a$ are commute, we obtain
\begin{equation}
\omega(aX,aY)+\sigma(X,Y)=0.\nonumber
\end{equation}
Thus we get
\begin{equation}
a^{\ast}\omega(X,Y)+\sigma(X,Y)=0.\label{eq:3.18}
\end{equation}
Since the equation (\ref{eq:3.18}) is hold for all $X$ and $Y$, we
get
\begin{eqnarray}
\sigma=-a^{\ast}\omega.\nonumber
\end{eqnarray}
\end{proof}
We say that 2-form $\sigma$ is the twist of Hitchin pair
$(\omega,a)$.\\

A symplectic+subtangent structure on a generalized almost subtangent
manifold $M$ consists of a pair $(\omega, J)$ with
$\omega$-symplecti- c and $J$-subtangent structure on $M$, which
commute.
\begin{lemma}
Let $(M,\omega)$ be a symplectic manifold. $(\omega, a)$ is a
symplectic+subtangent structure if and only if $d\omega_{a} = 0$, $
a^{\ast}\omega = 0$.
\end{lemma}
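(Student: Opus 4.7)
The plan is to establish the equivalence by translating each of the two conditions $a^{\ast}\omega=0$ and $d\omega_{a}=0$ into an equivalent algebraic or differential condition on $a$ alone, using the non-degeneracy and closedness of $\omega$, and then matching them against the symplectic+subtangent data.

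First I would note that the commutation $\omega(aX,Y)=\omega(X,aY)$ is precisely what forces $\omega_{a}(X,Y):=\omega(aX,Y)$ to be an antisymmetric $2$-form, so it is tacit in the hypothesis $d\omega_{a}=0$ and is one of the defining conditions of a symplectic+subtangent structure. Under commutation,
\[
(a^{\ast}\omega)(X,Y) \;=\; \omega(aX,aY) \;=\; \omega(X,a^{2}Y),
\]
and non-degeneracy of $\omega$ then gives $a^{\ast}\omega=0 \iff a^{2}=0$. This captures the algebraic half ($a^{2}=0$) of the subtangent structure in both directions.

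Next, for the differential half, I would expand $d\omega_{a}$ via the Koszul formula and apply the closedness relation $d\omega=0$ to the triples $(aX,Y,Z)$, $(X,aY,Z)$, $(X,Y,aZ)$, using commutation and the algebraic relation $a^{2}=0$ established above. After carefully combining the six resulting identities, the computation should collapse to an expression of the form
\[
d\omega_{a}(X,Y,Z) \;=\; -\sum_{\mathrm{cyc}(X,Y,Z)} \omega\bigl(N_{a}(X,Y),\,Z\bigr),
\]
where $N_{a}$ is the classical Nijenhuis tensor of $a$. The non-degeneracy of $\omega$, together with the algebraic identities that $a^{2}=0$ forces on $N_{a}$ (in particular $N_{a}(aX,Y)=N_{a}(X,aY)=-a\,N_{a}(X,Y)$ and $N_{a}(aX,aY)=0$), then promotes the vanishing of this cyclic sum to the vanishing of $N_{a}$ itself, yielding $d\omega_{a}=0 \iff N_{a}=0$.

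Putting the two steps together, the symplectic+subtangent conditions (commutation, $a^{2}=0$, $N_{a}=0$, together with the standing hypothesis $d\omega=0$) become equivalent to the pair $d\omega_{a}=0$ and $a^{\ast}\omega=0$, which is precisely the statement of the lemma. The main obstacle is the Koszul-level identity in the second step: its verification is a bookkeeping exercise involving three separate applications of $d\omega=0$ and repeated use of commutation, and it is also non-trivial to argue that the displayed cyclic sum vanishing forces $N_{a}$ itself to vanish rather than merely its totally antisymmetric part---here one leans on non-degeneracy of $\omega$ combined with the internal symmetries of $N_{a}$ enforced by $a^{2}=0$.
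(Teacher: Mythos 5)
Your algebraic half is fine and in fact tightens a point the paper leaves vague: commutation is exactly the antisymmetry of $\omega_{a}$, and under commutation $(a^{\ast}\omega)(X,Y)=\omega(X,a^{2}Y)$, so non-degeneracy gives $a^{\ast}\omega=0\iff a^{2}=0$. The gap is in the differential half. The identity you propose,
\[
d\omega_{a}(X,Y,Z)=-\sum_{\mathrm{cyc}}\omega\bigl(N_{a}(X,Y),Z\bigr),
\]
is false. A quick way to see it cannot hold: every term of $\omega(N_{a}(X,Y),Z)$ carries two occurrences of $a$, while every term of the Koszul expansion of $d\omega_{a}(X,Y,Z)$ carries one, and neither $d\omega=0$ nor $a^{2}=0$ changes that count. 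Concretely, on $\mathbb{R}^{4}$ with $\omega=dx_{1}\wedge dy_{1}+dx_{2}\wedge dy_{2}$ define $a$ by $\omega(aX,Y)=f(y_{1})\,(dx_{1}\wedge dx_{2})(X,Y)$; then $a^{2}=0$, $a$ commutes with $\omega$ and $a^{\ast}\omega=0$, but at $(\partial_{x_{1}},\partial_{x_{2}},\partial_{y_{1}})$ the left side equals $f'$ while every term of the right side vanishes. The identity that actually governs the situation is the one the paper imports from Crainic, (\ref{eq:3.19}):
\[
i_{N_{a}(X,Y)}(\omega)=i_{aX\wedge Y+X\wedge aY}(d\omega_{a})-i_{aX\wedge aY}(d\omega)-i_{X\wedge Y}(d(a^{\ast}\omega)),
\]
in which $d\omega_{a}$ enters on $a$-shifted arguments and, crucially, the left side is the full one-form $\omega(N_{a}(X,Y),\cdot)$. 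Under $d\omega=d\omega_{a}=d(a^{\ast}\omega)=0$ this yields $\omega(N_{a}(X,Y),Z)=0$ for \emph{every} $Z$, hence $N_{a}=0$ by non-degeneracy, with no antisymmetrization to undo. That is the paper's argument for the sufficiency direction.

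Even granting your displayed formula, the final step would still not close: the vanishing of $\sum_{\mathrm{cyc}}\omega(N_{a}(X,Y),Z)$ kills only the totally antisymmetric part of the tensor $B(X,Y,Z)=\omega(N_{a}(X,Y),Z)$, which is antisymmetric in $(X,Y)$ and can perfectly well have a nonzero mixed-symmetry component. The auxiliary relations you invoke, $N_{a}(aX,Y)=-aN_{a}(X,Y)$ and $N_{a}(aX,aY)=0$, constrain $B$ only on $a$-shifted arguments, and since $a$ is nilpotent these relations cannot be inverted to recover $B$ on arbitrary arguments. So the passage from ``cyclic sum vanishes'' to ``$N_{a}=0$'' is precisely the missing idea; it is supplied in the paper by using (\ref{eq:3.19}) rather than a Koszul expansion of $d\omega_{a}$ itself.
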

\begin{proof}
We will only prove the sufficient condition. Since $(M,\omega)$ is a
symplectic manifold, then $d\omega=0$. Since $d\omega_{a} = 0$,
 $a^{\ast}\omega=0 $, by using the following equation (see
\cite{crainic1}),
\begin{eqnarray}
i_{N_{a}(X,Y)}(\omega)&=&i_{aX\wedge Y+X\wedge
aY}(d\omega_{a})-i_{aX\wedge aY}(d\omega)\nonumber\\
& &\ -i_{X\wedge
Y}(d(a^{\ast}\omega)),\label{eq:3.19}
\end{eqnarray}
we get $i_{N_{a}(X,Y)}(\omega)=-i_{X\wedge Y}(d(a^{\ast}\omega))=0$.
Hence, $$\omega(N_{a}(X,Y),\bullet)=0.$$ Since $\omega$ is
non-degenerate, then $N_{a}=0$. Thus $a$ is a subtangent structure.
On the other hand, $a^{\ast}\omega = 0$ implies that $\omega$ and
$a$ commute.

The converse is clear.
\end{proof}
Next we relate (S1) and the 2-form $\omega$.
\begin{lemma}
If $\pi$ is a non-degenerate bivector on a generalized almost
subtangent manifold $M$, and $\omega$ is the inverse 2-form (defined
by $\omega_{\sharp} = (\pi^{\sharp})^{-1}$), then $\pi$ satisfies
(S1) if and only if $\omega$ is closed.
\end{lemma}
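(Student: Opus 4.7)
The plan is to exploit the non-degeneracy of $\pi$ to rewrite every vector field as $\pi^{\sharp}\xi$ for a unique 1-form $\xi$, and then to express the exterior derivative $d\omega$ intrinsically in terms of $\pi$ and the cotangent bracket $[\cdot,\cdot]_{\pi}$. Since $\omega_{\sharp}=(\pi^{\sharp})^{-1}$, the fundamental pairing identity
\begin{equation*}
\omega(\pi^{\sharp}\xi,\pi^{\sharp}\eta)=\langle \xi,\pi^{\sharp}\eta\rangle=-\pi(\xi,\eta)
\end{equation*}
translates $\omega$ into Poisson-geometric language, turning the closedness condition on $\omega$ into a condition purely involving $\pi$.

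First I would fix three 1-forms $\xi,\eta,\zeta$, set $X=\pi^{\sharp}\xi$, $Y=\pi^{\sharp}\eta$, $Z=\pi^{\sharp}\zeta$, and write out the standard Cartan formula
\begin{align*}
d\omega(X,Y,Z) &= X\omega(Y,Z)+Y\omega(Z,X)+Z\omega(X,Y)\\
&\quad -\omega([X,Y],Z)-\omega([Y,Z],X)-\omega([Z,X],Y).
\end{align*}
Using the pairing identity, the three derivative terms on the first line become Lie derivatives of $\pi(\eta,\zeta)$, $\pi(\zeta,\xi)$, $\pi(\xi,\eta)$ along $\pi^{\sharp}\xi$, $\pi^{\sharp}\eta$, $\pi^{\sharp}\zeta$ respectively, which I would then regroup using the definition
\begin{equation*}
[\xi,\eta]_{\pi}=L_{\pi^{\sharp}\xi}\eta-L_{\pi^{\sharp}\eta}\xi-d\pi(\xi,\eta)
\end{equation*}
to absorb the $d\pi(\cdot,\cdot)$ terms. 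Each term on the second line equals $\langle \omega_{\sharp}[X,Y],Z\rangle$, and under $\omega_{\sharp}\pi^{\sharp}=\mathrm{id}$ contributes a piece of the form $\langle\zeta,[\pi^{\sharp}\xi,\pi^{\sharp}\eta]\rangle$ and its cyclic analogues.

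After the dust settles, the cancellations should produce the identity
\begin{equation*}
d\omega(X,Y,Z)=\bigl\langle\zeta,\;\pi^{\sharp}([\xi,\eta]_{\pi})-[\pi^{\sharp}\xi,\pi^{\sharp}\eta]\bigr\rangle+\text{cyclic in }(\xi,\eta,\zeta),
\end{equation*}
up to a global sign. Since each cyclic summand is the pairing of a 1-form with the defect of (S1), and since every triple of vector fields and every 1-form $\zeta$ is obtained via $\pi^{\sharp}$ (by non-degeneracy), the vanishing of $d\omega$ on all triples is equivalent to the vanishing of $\pi^{\sharp}([\xi,\eta]_{\pi})-[\pi^{\sharp}\xi,\pi^{\sharp}\eta]$ for all $\xi,\eta$, i.e.\ to (S1).

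The main obstacle is purely computational sign-and-bookkeeping: tracking the sign in $\omega(\pi^{\sharp}\xi,\pi^{\sharp}\eta)=-\pi(\xi,\eta)$, correctly interchanging Lie derivatives with the pairing, and verifying that the six terms of $d\omega$ reassemble cyclically into three copies of the (S1) defect rather than cancelling or producing spurious curvature-like terms. Once this bookkeeping is carried out, the equivalence in both directions is immediate from the non-degeneracy of $\pi$.
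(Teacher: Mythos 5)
Your plan is sound and, suitably completed, does prove the lemma, but it takes a different and slightly more expensive route than the paper. The paper never expands the full three-argument $d\omega(X,Y,Z)$: it substitutes $\xi=i_X\omega$, $\eta=i_Y\omega$ into (S1), applies $\omega_\sharp$ to both sides, and recognizes the resulting \emph{one-form} identity as $i_{X\wedge Y}(d\omega)=0$ by means of the two-argument formula $i_{X\wedge Y}(d\omega)=L_X(i_Y\omega)-L_Y(i_X\omega)+d(i_{X\wedge Y}\omega)-i_{[X,Y]}\omega$. Because that computation stays at the level of $1$-forms and $\omega_\sharp=(\pi^\sharp)^{-1}$ is injective, every step is reversible and both implications drop out at once. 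Your version, contracting $d\omega$ against three vector fields via the Cartan formula, is the ``fully evaluated'' form of the same computation; it costs more sign bookkeeping but makes visible the relation with the trivector $[\pi,\pi]$.

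The one point you must not gloss over is the converse. As you state your final identity, $d\omega(X,Y,Z)$ is a \emph{cyclic sum} of three pairings $\langle\zeta,D(\xi,\eta)\rangle$, where $D(\xi,\eta)=\pi^{\sharp}([\xi,\eta]_{\pi})-[\pi^{\sharp}\xi,\pi^{\sharp}\eta]$ is the (S1) defect. Vanishing of a cyclic sum does not by itself force each summand to vanish, so ``$d\omega=0$ on all triples implies $D=0$'' is not immediate from what you wrote. To close this you need one extra observation: $D$ is $C^{\infty}(M)$-bilinear (the Leibniz terms of $[\cdot,\cdot]_{\pi}$ and of the Lie bracket cancel, using $\xi(\pi^{\sharp}\eta)=-\pi(\xi,\eta)$), and the resulting tensor $\langle\zeta,D(\xi,\eta)\rangle$ is totally antisymmetric --- it is, up to a nonzero constant, $[\pi,\pi](\xi,\eta,\zeta)$. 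Then the cyclic sum is three times a single term, and since $\zeta$ ranges over all $1$-forms by non-degeneracy, $D=0$ follows. Alternatively, push the computation to the sharper non-cyclic identity $d\omega(X,Y,Z)=c\,\langle\zeta,D(\xi,\eta)\rangle$ with $c\neq 0$, from which both directions are immediate; or simply do as the paper does and stay at the $1$-form level, where injectivity of $\omega_\sharp$ settles the converse without any antisymmetry discussion.
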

\begin{proof}
Applying  $\xi=i_{X}(\omega)$ and $\eta=i_{Y}(\omega)$ to (S1), we
get
\begin{eqnarray}
\pi^{\sharp}(L_{X}(i_{Y}(\omega))-L_{Y}(i_{X}(\omega))-d(\omega_{\sharp}Y(\pi^{\sharp}\omega_{\sharp}X))=[X,Y]\nonumber
\end{eqnarray}
Since $\omega_{\sharp}=(\pi^{\sharp})^{-1}$, we have
\begin{eqnarray}
\pi^{\sharp}(L_{X}(i_{Y}(\omega))-L_{Y}(i_{X}(\omega))-d(\omega_{\sharp}Y)(X))=[X,Y]\label{eq:3.20}
\end{eqnarray}
Then applying $\omega_{\sharp}$ to (\ref{eq:3.20}), we derive
\begin{eqnarray}
L_{X}(i_{Y}(\omega))-L_{Y}(i_{X}(\omega))-d(\omega_{\sharp}Y)(X)=\omega_{\sharp}([X,Y])\nonumber
\end{eqnarray}
Using
\begin{eqnarray} i_{X\wedge
Y}(d\sigma)&=&L_{X}(i_{Y}\sigma)-L_{Y}(i_{X}\sigma)+d(i_{X\wedge
Y}\sigma)\nonumber\\& &\ - i_{[X,Y]}\sigma, \label{eq:3.14}
\end{eqnarray} formula, then we get
\begin{eqnarray}
L_{X}(i_{Y}\omega)-L_{Y}(i_{X}\omega)+d(i_{X\wedge
Y}\omega)-i_{[X,Y]}\omega=i_{X\wedge Y}d\omega.\nonumber
\end{eqnarray}
Since left hand side vanishes in above equation, then $i_{X\wedge
Y}d\omega=0$. Thus $d\omega=0$. The converse is clear.
\end{proof}

Thus, we have the following result which shows that there is close
relationship between condition (S1) and a symplectic groupoid.
\begin{theorem}
Let $M$ be a generalized almost subtangent manifold. There is a 1-1
correspondence between:
\begin{enumerate}
\item[(i)]Integrable bivectors $\pi$ on $M$ satisfying (S1),
\item[(ii)]Symplectic groupoids $(\Sigma,\omega)$ over $M$.
\end{enumerate}
\end{theorem}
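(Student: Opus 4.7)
The plan is to identify condition (S1) with the Poisson condition $[\pi,\pi]=0$, so that whenever $\pi$ satisfies (S1) the cotangent bundle $T^{\ast}M$ acquires its standard cotangent Lie algebroid structure, with anchor $\rho=\pi^{\sharp}$ and bracket $[,]_{\pi}$. ``Integrability'' in (i) then means precisely Lie-algebroid integrability of $T^{\ast}M$, and by the blanket source-simply-connected convention adopted in the paper its integration $\Sigma$ is unique. The correspondence itself is produced by applying Theorem~1 (Bursztyn) to a single distinguished IM form, namely the identity $u=\mathrm{id}_{T^{\ast}M}$.

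For the direction (i)$\Rightarrow$(ii), I would first verify that $u=\mathrm{id}$ is an IM form on $A=T^{\ast}M$. Axiom~(i) of the IM condition reduces to the skew-symmetry of $\pi$, while axiom~(ii) becomes the defining formula for $[\xi,\eta]_{\pi}$ once one substitutes $\rho(\xi)=\pi^{\sharp}\xi$ and $d\langle u(\xi),\rho(\eta)\rangle=-d\pi(\xi,\eta)$, so it holds tautologically. Theorem~1 then produces a closed multiplicative 2-form $\omega$ on $\Sigma$. The genuine obstacle is showing that $\omega$ is \emph{non-degenerate}, and I expect this to be the main technical hurdle. I would handle it by restricting to the identity embedding $M\hookrightarrow\Sigma$, where the kernel of $\omega$ is governed by the pairing $\omega(\alpha,v)=\langle u(\alpha),ds(v)\rangle$; since $u=\mathrm{id}$ is a fibrewise isomorphism and $\dim\Sigma=2\dim M$, this forces $\omega$ to be non-degenerate along $M$, and multiplicativity then propagates non-degeneracy to all of $\Sigma$ via left/right translation.

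For the reverse direction (ii)$\Rightarrow$(i), a symplectic groupoid $(\Sigma,\omega)$ over $M$ canonically induces a Poisson bivector $\pi$ on $M$ (with $s$ Poisson and $t$ anti-Poisson), and under the identification $\mathrm{Lie}(\Sigma)\cong T^{\ast}M$ the IM form attached to $\omega$ by Theorem~1 is again the identity. In particular $\pi$ is Poisson and so satisfies (S1). Bijectivity of the correspondence then follows from the one-to-one statement of Theorem~1 together with the uniqueness of source-simply-connected integrations, so once the non-degeneracy step in (i)$\Rightarrow$(ii) is settled the theorem is complete.
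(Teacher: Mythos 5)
Your proposal is correct and is essentially the paper's own route: the paper simply observes that $\pi^{\sharp}$ and $[,]_{\pi}$ make $T^{\ast}M$ a Lie algebroid and defers to Crainic's Theorem~3.2, whose content is exactly your argument — identify (S1) with $[\pi,\pi]=0$, integrate the cotangent algebroid, and apply Theorem~1 to the identity IM form $u=\mathrm{id}_{T^{\ast}M}$, with non-degeneracy of $\omega$ checked along the identity section and propagated by multiplicativity. You have in fact supplied the details (verification that $\mathrm{id}$ is an IM form, the non-degeneracy step, and the inverse construction) that the paper leaves to the citation.
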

Since $\pi^{\sharp}$ and $[,]_{\pi}$ define a Lie algebroid
structure on $T^{\ast}M$, one can obtain the above theorem by
following the steps given in (\cite{crainic1},Theorem 3.2). We now
give the conditions for (S2) in terms of $\omega$ and $\omega_{a}$.
\begin{lemma}
Let $M$ be a generalized  almost subtangent manifold and $\omega$ a
symplectic form. Given a non-degenerate bivector $\pi$ (i.e.
$\pi^{\sharp}=(\omega_{\sharp})^{-1}$) and  a map $a:TM\rightarrow
TM$, then $\pi$ and $a$ satisfy (S2) if and only if $\omega$ and $a$
commute and $\omega_{a}$ is closed.
\end{lemma}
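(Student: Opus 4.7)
The plan is to translate each of the two identities in (S2) into a statement about $\omega$ and $\omega_a$, exactly mirroring the approach used in the preceding lemma for (S1). Since $\pi$ is non-degenerate and $\omega_{\sharp} = (\pi^{\sharp})^{-1}$, the identifications $i_X\omega = \omega_{\sharp}X$ and $\pi^{\sharp}(i_X\omega) = X$ will be used throughout.

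For the commutation half, I would start from the first formula of (S2), $a\pi^{\sharp} = \pi^{\sharp}a^{\ast}$, and pre- and post-compose with $\omega_{\sharp}$. Using $\omega_{\sharp}\pi^{\sharp} = \mathrm{id}$ and $\pi^{\sharp}\omega_{\sharp} = \mathrm{id}$ one gets the equivalent identity $\omega_{\sharp}a = a^{\ast}\omega_{\sharp}$, which when paired with a vector field $Y$ reads $\omega(aX,Y) = \omega(X,aY)$, i.e.\ $\omega$ and $a$ commute. This equivalence is purely formal and reversible.

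For the closedness half, I would substitute $\xi = i_X\omega$ and $\eta = i_Y\omega$ into the second formula of (S2). Because $\omega$ is symplectic, the preceding lemma shows (S1) is automatic, so $[i_X\omega,i_Y\omega]_{\pi} = i_{[X,Y]}\omega$. Using commutation (already available from the first equation of (S2)) one has $a^{\ast}(i_X\omega) = i_X\omega_a$, so the left-hand side collapses to $i_{[X,Y]}\omega_a$. On the right-hand side, the term $\pi(a^{\ast}\xi,\eta) = \eta(\pi^{\sharp}a^{\ast}\xi) = \eta(aX)$ becomes $-\omega_a(X,Y)$, after using the first formula of (S2) to rewrite $\pi^{\sharp}a^{\ast}\xi = a\pi^{\sharp}\xi = aX$. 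With these substitutions the second formula of (S2) takes the form
\[ i_{[X,Y]}\omega_a = L_X(i_Y\omega_a) - L_Y(i_X\omega_a) + d(\omega_a(X,Y)). \]
Comparing this with the Cartan-type identity (\ref{eq:3.14}) applied to $\sigma = \omega_a$, the displayed equality is exactly $i_{X\wedge Y}(d\omega_a) = 0$, which, since $X$ and $Y$ are arbitrary, is equivalent to $d\omega_a = 0$. Every step is reversible, so the equivalence runs in both directions.

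The main obstacle is sign bookkeeping when expanding $[i_X\omega,i_Y\omega]_{\pi}$ and the pairing $\pi(a^{\ast}\xi,\eta)$, and deploying the first formula of (S2) at the right instant to turn $\pi^{\sharp}a^{\ast}\xi$ into $aX$. Once that accounting is done, the equivalence between (S2) and ``commutation plus $d\omega_a = 0$'' reduces, via the Cartan identity, to exactly the manipulation used in the preceding lemma to trade (S1) for $d\omega = 0$.
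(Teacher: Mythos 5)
Your proposal is correct and follows essentially the same route as the paper: composing the first identity of (S2) with $\omega_{\sharp}$ to get commutation, then substituting $\xi=i_X\omega$, $\eta=i_Y\omega$ into the second identity and reducing it via the Cartan-type formula (\ref{eq:3.14}) to $i_{X\wedge Y}(d\omega_a)=0$. The only cosmetic difference is that you invoke the preceding lemma to collapse $[i_X\omega,i_Y\omega]_{\pi}$ to $i_{[X,Y]}\omega$ at the outset, whereas the paper expands that bracket with (\ref{eq:3.14}) and uses $d\omega=0$ only at the final step; the computations are identical.
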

\begin{proof}
For a 1-form $\xi$, we  use $\xi=i_{X}\omega=\omega_{\sharp}X$ such
that $X$ is an arbitrary vector field. Since
\begin{eqnarray}
a\pi^{\sharp}(i_{X}\omega)&=&\pi^{\sharp}a^{\ast}(i_{X}\omega),\label{eq:3.21}
\end{eqnarray}
applying $\omega_{\sharp}$ to (\ref{eq:3.21}) and using
$\pi^{\sharp}=(\omega_{\sharp})^{-1}$; for a vector field $Y$, we
have
\begin{eqnarray}
\omega(aX,Y)=a^{\ast}(i_{X}\omega)(Y),\nonumber
\end{eqnarray}
which gives
\begin{eqnarray}
\omega(aX,Y)&=&\omega(X,aY).\nonumber
\end{eqnarray}
Let $\xi$ and $\eta$ be 1-forms such that
$\xi=i_{X}\omega=\omega_{\sharp}(X)$ and
$\eta=i_{Y}\omega=\omega_{\sharp}(Y)$ for  arbitrary vector fields
$X$ and $Y$. Then from (\ref{eq:3.5}) we have
\begin{eqnarray}
a^{\ast}(L_{\pi^{\sharp}\omega_{\sharp}(X)}(\omega_{\sharp}(Y))-L_{\pi^{\sharp}\omega_{\sharp}(Y)}(\omega_{\sharp}(X))-d\pi(i_{X}\omega,i_{Y}\omega))\nonumber\\
= L_{\pi^{\sharp}\omega_{\sharp}(X)}(a^{\ast}\omega_{\sharp}(Y))\nonumber\\
 -L_{\pi^{\sharp}\omega_{\sharp}(Y)}(a^{\ast}\omega_{\sharp}(X))\nonumber\\
 -d\pi(a^{\ast}i_{X}\omega,i_{Y}\omega).\nonumber
\end{eqnarray}
Since $\pi^{\sharp}=(\omega_{\sharp})^{-1}$ and
$a^{\ast}i_{Y}\omega=i_{Y}\omega_{a}$, we obtain
\begin{eqnarray}
a^{\ast}(L_{X}(i_{Y}\omega)-L_{Y}(i_{X}\omega)-d(i_{Y\wedge
X}\omega))&=& L_{X}(i_{Y}\omega_{a})\nonumber\\
&-&L_{Y}(i_{X}\omega_{a})\nonumber\\
&-&d(i_{Y}\omega(\pi^{\sharp}(\omega_{\sharp}(aX)))).\nonumber
\end{eqnarray}
Using (\ref{eq:3.14}) for left hand side, then we get
\begin{eqnarray}
a^{\ast}(i_{X\wedge Y}(d\omega)+i_{[X,Y]}\omega)=
L_{X}(i_{Y}\omega_{a}) -L_{Y}(i_{X}\omega_{a})
-d(\omega(Y,aX)).\nonumber
\end{eqnarray}
Since $\omega$ and $a$ commute, using also this for the right hand
side, we have
\begin{eqnarray}
a^{\ast}(i_{X\wedge Y}(d\omega))+i_{[X,Y]}\omega_{a}=i_{X\wedge
Y}(d\omega_{a})+i_{[X,Y]}\omega_{a}.\label{eq:3.23}
\end{eqnarray}
Since $\omega$ is closed, (\ref{eq:3.23}) implies that $ i_{X\wedge
Y}(d\omega_{a})=0$, i.e. $\omega_{a}$ is closed.
\end{proof}
Note that it is well known that there is one to one correspondence
between (1,1)-tensors $a$ commuting with $\omega$ and 2-forms on
$M$. On the other hand, it is easy to see that (S2) is equivalent to
the fact that $a^{\ast}$ is an $IM$ form on the Lie algebroid
$T^{\ast}M$ associated Poisson structure $\pi$. Thus from the above
discussion, Lemma 4 and Theorem 1, one can conclude the following
theorem.
\begin{theorem}\label{the:3}
Let $M$ be a generalized  almost subtangent manifold. Let $\pi$ be
an integrable Poisson structure on $M$, and  $(\Sigma,\omega)$ the
symplectic groupoid over $M$. Then there is a natural 1-1
correspondence between
\begin{enumerate}
\item[(i)] (1,1)-tensors $a$ on $M$ satisfying (S2).
\item[(ii)]multiplicative (1,1)-tensors $J$ on $\Sigma$ with the property that $(J,\omega)$ is a Hitchin
pair.
\end{enumerate}
\end{theorem}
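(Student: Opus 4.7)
The plan is to assemble the correspondence by composing three simpler bijections, each of which is already at hand.

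First, I would translate condition (ii) into the language of 2-forms on $\Sigma$. Because $\omega$ is non-degenerate, the map $J\mapsto\omega_J$ defined by $\omega_J(u,v):=\omega(Ju,v)$ is a linear isomorphism between $(1,1)$-tensors and $(0,2)$-tensors on $\Sigma$. The commutation axiom $\omega(Ju,v)=\omega(u,Jv)$ of the Hitchin pair definition is precisely skew-symmetry of $\omega_J$, and the second Hitchin axiom is $d\omega_J=0$. The key point at this stage is to verify, using multiplicativity of $\omega$ together with the definition of multiplicativity for $(1,1)$-tensors recalled in Section 2, that $J$ is multiplicative if and only if $\omega_J$ is a multiplicative 2-form. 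Granting this, (ii) is in bijection with closed multiplicative 2-forms on $\Sigma$.

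Second, I would apply Theorem 1. Since $\pi$ is an integrable Poisson structure with source-simply-connected symplectic integration $(\Sigma,\omega)$, the Lie algebroid of $\Sigma$ is $T^{\ast}M$ with anchor $\pi^{\sharp}$ and bracket $[\,,\,]_{\pi}$, so Theorem 1 supplies a bijection between closed multiplicative 2-forms on $\Sigma$ and $IM$ forms of $T^{\ast}M$.

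Third, I would identify $(1,1)$-tensors $a$ on $M$ satisfying (S2) with $IM$ forms $u:T^{\ast}M\to T^{\ast}M$ of the Lie algebroid $T^{\ast}M$ via $a\mapsto u:=a^{\ast}$. Unpacking: the $IM$ axiom (i) reads $\langle a^{\ast}\xi,\pi^{\sharp}\eta\rangle=-\langle a^{\ast}\eta,\pi^{\sharp}\xi\rangle$, which is equivalent to the first equation of (S2), namely $a\pi^{\sharp}=\pi^{\sharp}a^{\ast}$; and the $IM$ axiom (ii) for $u=a^{\ast}$ is exactly equation (\ref{eq:3.5}). This is the remark already recorded after Lemma 4, so (i) in the theorem is in bijection with $IM$ forms of $T^{\ast}M$. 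Composing the three bijections $a\leftrightarrow a^{\ast}\leftrightarrow\omega_J\leftrightarrow J$ yields the asserted natural 1-1 correspondence.

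The main obstacle is the first step: proving that $J\mapsto\omega(J\cdot,\cdot)$ intertwines multiplicativity of tensors with multiplicativity of forms. This requires unpacking the definition at a composable pair $(g,h)\in G_2$, using both non-degeneracy and multiplicativity of $\omega$ together with the fact that $dm_{(g,h)}$ restricts to a linear isomorphism on composable tangent vectors; the argument parallels the one sketched in (\cite{crainic1}, Theorem 3.2) for the generalized complex case, and everything else in the proof is then a purely formal composition of natural bijections.
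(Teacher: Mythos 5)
Your proposal is correct and follows essentially the same route as the paper: the paper's own justification is precisely the chain (1,1)-tensors commuting with $\omega$ $\leftrightarrow$ 2-forms, (S2) $\leftrightarrow$ $a^{\ast}$ being an $IM$ form on $T^{\ast}M$, and Theorem 1 linking $IM$ forms to closed multiplicative 2-forms on $\Sigma$. You are in fact more explicit than the paper about the one genuinely nontrivial step (that $J\mapsto\omega_J$ intertwines multiplicativity of tensors with multiplicativity of forms), which the paper leaves implicit by deferring to Crainic.
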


We recall the notion of generalized subtangent map between
generalized subtangent manifolds. This notion was given in
\cite{vaisman} similar to the generalized subtangent map between
generalized complex manifolds given in \cite{crainic1}.

Let $(M_{i},\mathcal{J}_{i})$, $i={1,2}$, be two generalized
subtangent manifolds, and let $a_{i}, \pi_{i}, \sigma_{i}$ be the
components of $\mathcal{J}_{i}$ in the matrix representation
(\ref{eq:3.1}). A map $f : M_{1}\rightarrow M_{2}$ is called
generalized subtangent iff $f$ maps $\pi_{1}$ into $\pi_{2}$,
$f^{\ast}\sigma_{2} =\sigma_{1}$ and $(df)\circ a_{1} = a_{2}\circ
(df)$\cite{vaisman}.

We now state and prove the main result of this paper. This result
gives equivalent assertions between the condition (S3), twist
$\sigma$ of $(\omega,J)$ and subtangent maps for a symplectic
groupoid over $M$.
\begin{theorem}
Let $M$ be a generalized  almost subtangent manifold and
$(\Sigma,\omega,J)$ the induced symplectic groupoid over $M$ with
the induced multiplicative (1,1)-tensor. Assume that $(\pi,J)$
satisfy (S1), (S2) with integrable $\pi$. Then for a 2-form on $M$,
the following assertions are equivalent.
\begin{enumerate}
\item [(i)] (S3) is satisfied,
\item [(ii)] $-J^{\ast}\omega=t^{\ast}\sigma- s^{\ast}\sigma$,
\item [(iii)] $(t,s):\Sigma\rightarrow M\times \overline{M}$ is
generalized subtangent map; (condition of generalized subtangent map
on $M$ is $(dt)\circ a_{1} = a_{2}\circ (dt)$, this condition on
$\overline{M}$ is $(ds)\circ a_{1} = -a_{2}\circ (ds)$).
\end{enumerate}
\end{theorem}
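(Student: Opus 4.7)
The plan is to establish the equivalences by proving (iii)~$\Leftrightarrow$~(ii) largely by unpacking definitions, and then (i)~$\Leftrightarrow$~(ii) by comparing the infinitesimal multiplicative (IM) data of two multiplicative 2-forms on $\Sigma$. Under the standing hypothesis that $(\pi, J)$ satisfies (S1) and (S2), Theorem~\ref{the:3} yields the symplectic groupoid $(\Sigma, \omega)$ integrating $(M, \pi)$ together with the multiplicative $(1,1)$-tensor $J$ which forms a Hitchin pair with $\omega$. Applying Lemma~1 on $\Sigma$ to the non-degenerate bivector $\pi_{\Sigma} = \omega^{-1}$ identifies the twist of $(\omega, J)$ on $\Sigma$ as $\sigma_{\Sigma} = -J^{\ast}\omega$; on the product $M \times \overline{M}$ the induced twist is $\sigma \oplus (-\sigma)$.

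For (iii)~$\Rightarrow$~(ii), the $\sigma$-pullback clause in the definition of a generalized subtangent map reads $(t,s)^{\ast}(\sigma \oplus (-\sigma)) = \sigma_{\Sigma}$, which is exactly $t^{\ast}\sigma - s^{\ast}\sigma = -J^{\ast}\omega$. For (ii)~$\Rightarrow$~(iii), the bivector-intertwining is automatic since $t$ is Poisson and $s$ is anti-Poisson for any symplectic groupoid; the $\sigma$-pullback is (ii); and the tensor intertwinings $(dt) \circ J = a \circ (dt)$ and $(ds) \circ J = -a \circ (ds)$ follow from the multiplicativity of $J$ by restricting to right-invariant vector fields along the unit section $M \hookrightarrow \Sigma$, where $J$ identifies with $a^{\ast}$ on $A = T^{\ast}M$, and then using $a\pi^{\sharp} = \pi^{\sharp}a^{\ast}$ from (S2) together with the anchor identity $\rho = dt|_A = \pi^{\sharp}$; the sign on $s$ comes from $s = t \circ i$ with $i$ the anti-symplectic inversion.

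For (i)~$\Leftrightarrow$~(ii), both $-J^{\ast}\omega$ and $t^{\ast}\sigma - s^{\ast}\sigma$ are multiplicative 2-forms on $\Sigma$, so by the correspondence with IM data on $A = T^{\ast}M$ (Theorem~1 and its extension to the non-closed case, \cite{bursztyn}), equality of these forms is equivalent to equality of their IM data. I would read off each IM map in terms of $(a, \pi, \sigma)$: matching the pointwise parts yields the algebraic equation $a^{2} + \pi^{\sharp}\sigma_{\sharp} = 0$, while matching the compatibility with the bracket $[\cdot,\cdot]_{\pi}$ on $T^{\ast}M$ yields the Nijenhuis equation $N_{a}(X,Y) = \pi^{\sharp}(i_{X \wedge Y} d\sigma)$. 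The main obstacle is this last identification: one must verify that the bracket-compatibility piece of the IM data produces precisely (\ref{eq:3.7}) without spurious terms, and here identity (\ref{eq:3.19}) from the proof of Lemma~2 serves as the key technical bridge, expressing $i_{N_{a}(X,Y)}(\omega)$ in terms of $d\omega_{a}$, $d\omega$, and $d(a^{\ast}\omega) = -d\sigma$.
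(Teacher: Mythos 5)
Your proposal is correct and follows essentially the same route as the paper: the paper also reduces (i)$\Leftrightarrow$(ii) to comparing the infinitesimal data of the two multiplicative 2-forms along the unit section (checking $\phi(X,\alpha)=0$ for the algebraic part and $d\phi(X,Y,\alpha)=0$ for the Nijenhuis part, with identity (\ref{eq:3.19}) as the bridge), and obtains (ii)$\Leftrightarrow$(iii) by unpacking the definition of a generalized subtangent map and deriving $dt\circ J=a\circ dt$, $ds\circ J=-a\circ ds$ from multiplicativity, $u_{\omega}=\mathrm{Id}$, $u_{\omega_{J}}=a^{\ast}$, and $s=t\circ i$.
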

\begin{proof}
(i)$\Leftrightarrow$ (ii). Define
$\phi=\widetilde{\sigma}-t^{\ast}\sigma+ s^{\ast}\sigma$, such that
$\widetilde{\sigma}=-J^{\ast}\omega$ and $A=ker(ds)|_{M}$. We know
from Theorem 1 that closed multiplicative 2-form $\theta$ on
$\Sigma$ vanishes if and only if $IM$ form $u_{\theta}=0$, i.e.
$\theta(X,\alpha)=0$, such that $X\in TM$, $\alpha\in A$. This case
can be applied for forms with high dimension, i.e. 3-form $\theta$
vanishes if and only if $\theta(X,Y,\alpha)=0$.

Since $\omega$ and $\omega_{J}$ are closed, from (\ref{eq:3.19}) we
get $i_{X\wedge Y}(d(J^{\ast}\omega))=-i_{N_{J}(X,Y)}\omega$.
Putting $\widetilde{\sigma}=-J^{\ast}\omega$, we obtain
\begin{equation}
i_{X\wedge
Y}(d\widetilde{\sigma})=i_{N_{J}(X,Y)}\omega.\label{eq:3.24}
\end{equation}
Since $d\phi=0 \Leftrightarrow d\phi(X,Y,\alpha)=0$, we have
\begin{eqnarray}
d\phi(X,Y,\alpha)=0 \Leftrightarrow
d\widetilde{\sigma}(X,Y,\alpha)-d(t^{\ast}\sigma)(X,Y,\alpha)\nonumber\\
+d(s^{\ast}\sigma)(X,Y,\alpha)=0\nonumber
\end{eqnarray}
On the other hand, we obtain
\begin{equation}
 d(t^{\ast}\sigma)(X,Y,\alpha)=d\sigma(dt(X),dt(Y),
 dt(\alpha)).\label{eq:3.25}
\end{equation}
If we take  $dt=\rho$ in (\ref{eq:3.25}) for $A$, we get
\begin{equation}
d(t^{\ast}\sigma)(X,Y,\alpha)=d\sigma(dt(X),dt(Y),
\rho(\alpha)).\label{eq:3.26}
\end{equation}
On the other hand, from \cite{bursztyn} we know that
\begin{equation}
Id_{\Sigma}=m\circ(t,Id_{\Sigma}).\label{eq:3.27}
\end{equation}
Differentiating (\ref{eq:3.27}), we obtain
\begin{eqnarray}
X=dt(X).\label{eq:3.28}
\end{eqnarray}
Using (\ref{eq:3.28}) in (\ref{eq:3.26}), we get
\begin{equation}
d(t^{\ast}\sigma)(X,Y,\alpha)=d\sigma(X,Y, \rho(\alpha)).\nonumber
\end{equation}
In a similar way, we see that
\begin{equation}
 d(s^{\ast}\sigma)(X,Y,\alpha)=d\sigma(ds(X),ds(Y),
 ds(\alpha)).\nonumber
\end{equation}
Since $\alpha\in kerds$, then $ds(\alpha)=0$. Hence
$d(s^{\ast}\sigma)=0$.\\
Thus
\begin{equation}
d\widetilde{\sigma}(X,Y,\alpha)=d\sigma(X,Y,\rho(\alpha)).\label{eq:3.29}
\end{equation}
Using (\ref{eq:3.24}) in (\ref{eq:3.29}), we derive
\begin{equation}
\omega(N_{J}(X,Y),\alpha)=d\sigma(X,Y,\rho(\alpha)).\label{eq:3.30}
\end{equation}
On the other hand, it is clear that $\phi=0\Leftrightarrow
\widetilde{\sigma}-t^{\ast}\sigma+ s^{\ast}\sigma=0$. Thus we obtain
\begin{eqnarray}
\widetilde{\sigma}(X,\alpha)=\sigma(X,\rho(\alpha)).\nonumber
\end{eqnarray}
Since $\widetilde{\sigma}=-J^{\ast}\omega$, we get
\begin{equation}
-\omega(JX,J\alpha)=\sigma(X,\rho(\alpha)).\label{eq:3.31}
\end{equation}
Since  Poisson bivector $\pi$ is integrable, it defines a Lie
algebroid whose anchor map is $\rho=\pi^{\sharp}$. Let us use
$\pi^{\sharp}$ instead of $\rho$ in (\ref{eq:3.30}) and
(\ref{eq:3.31}). Then we get
\begin{equation}
\omega(N_{J}(X,Y),\alpha)=d\sigma(X,Y,\pi^{\sharp}(\alpha)),\label{eq:3.32}
\end{equation}
\begin{equation}
-\omega(JX,J\alpha)=\sigma(X,\pi^{\sharp}(\alpha)).\nonumber
\end{equation}
Since $\omega(\alpha,X)=\alpha(X)$,
$\omega_{J}(\alpha,X)=\alpha(JX)$, from (\ref{eq:3.32}) we have
\begin{eqnarray}
-\alpha(N_{J}(X,Y))&=&d\sigma(X,Y,\pi^{\sharp}(\alpha))\nonumber\\
&=&i_{X\wedge Y}d\sigma(\pi^{\sharp}(\alpha))\nonumber\\
&=&\pi(\alpha,i_{X\wedge Y}d\sigma)\nonumber\\
&=&-\alpha(\pi^{\sharp}(i_{X\wedge Y}d\sigma)).\nonumber
\end{eqnarray}
i.e. $\alpha(N_{J}(X,Y))=\alpha(\pi^{\sharp}(i_{X\wedge
Y}d\sigma))$.\\
Since above equation is hold for all  non-degenerate $\alpha$, we
get
\begin{eqnarray}
N_{J}(X,Y)=\pi^{\sharp}(i_{X\wedge Y}d\sigma).\label{eq:3.33}
\end{eqnarray}
 On the other hand, from (\ref{eq:3.31}) we obtain

\begin{eqnarray}
\alpha(a^{2}X)&=&i_{X}\sigma(\pi^{\sharp}(\alpha))\nonumber\\
&=&\pi(\alpha,i_{X}\sigma)\nonumber\\
&=&-\alpha(\pi^{\sharp}\sigma_{\sharp}X).\nonumber
\end{eqnarray}
Thus we get
\begin{eqnarray}
a^{2}+\pi^{\sharp}\sigma_{\sharp}=0.\label{eq:3.34}
\end{eqnarray}
Then (i)$\Leftrightarrow$(ii) follows from (\ref{eq:3.33}) and
(\ref{eq:3.34}).\\

(ii)$\Leftrightarrow$(iii) $-J^{\ast}\omega=t^{\ast}\sigma-
s^{\ast}\sigma$ says $(t,s)$ is compatiple with 2-forms. Also it is
clear that $(t,s)$ and bivectors are compatible due to $\Sigma$ is a
symplectic groupoid. We will check the compatibility of $(t,s)$ and
(1,1)-tensors. From compatibility condition of $t$ and $s$, we will
get $dt\circ
J=a\circ dt$ and $ ds\circ J=-a\circ ds$.\\
For all $\alpha\in A$ and $V\in \chi(\Sigma),$
\begin{eqnarray}
\omega(\alpha,V)=\omega(\alpha,dt V)\nonumber
\end{eqnarray}
which is equivalent to
\begin{eqnarray}
\alpha(V)=\langle u_{\omega}(\alpha),dt V \rangle.\nonumber
\end{eqnarray}
Since $u_{\omega}=Id$ and $u_{\omega_{J}}=a^{\ast}$, we get
\begin{eqnarray}
\langle \alpha,a (dt (V)) \rangle&=&\alpha(a (dt (V)))\nonumber\\
&=&\omega(\alpha,dt (JV))\nonumber\\
&=&\langle \alpha,dt (JV)\rangle.\nonumber
\end{eqnarray}
Since this equation is hold for all $\alpha$, then  $a (dt)=dt (J)$.
Using $s=t\circ i$,
\begin{eqnarray}
a (ds (V))&=&a d(t\circ i) V\nonumber\\
&=&-ds (JV),\nonumber
\end{eqnarray}
which shows that $a (ds)=-ds(J)$. Thus proof is completed.
\end{proof}

\bigskip
\indent\textbf{Corresponding Author:}
\newline \indent Fulya \c{S}AH\.{I}N,

\indent Inonu University
\newline \indent Faculty of Science and Art\newline
\indent Department of Mathematics\newline \indent
 $44280$ Malatya-TURKEY\newline
\indent e-mail: fulya.sahin@inonu.edu.tr


\begin{thebibliography}{99}

\bibitem {ehr}Ehresmann, C.: Cat\`{e}gories topologiques et cat\`{e}gories differentiables, Coll. Geom. Diff.
Globales, Bruxelles, 137-150, (1959).

\bibitem {prad}Pradines, J.: 'Th\'{e}orie de Lie
pour les groupo\"{\i}des diff\'{e}rentiables, Calcul
diff\'{e}rentiel dans la cat\'{e}gorie des groupo\"{\i}des
infinit\'{e}simaux', Comptes rendus Acad. Sci. Paris 264 A, 245-248,
(1967).

\bibitem {hitc}Hitchin,N.: Generalized Calabi-Yau manifolds, Q. J.
Math., 54,  281–308, (2003).

\bibitem {gualt}Gualtieri, M.: Generalized complex geometry, Ph.D. thesis, Univ. Oxford, arXiv:math.DG/0401221, (2003).


\bibitem {vaisman}Vaisman, I.: Reduction and submanifolds of generalized complex manifolds, Dif. Geom. and its appl.,
25, 147-166, (2007).

\bibitem {wade1}Wade, A.: Dirac structures and paracomplex manifolds, C. R. Acad. Sci.
Paris, Ser. I, 338, 889-894, (2004).


\bibitem {crainic1}Crainic M.: Generalized complex structures and Lie brackets, Bull. Braz. Math. Soc., New Series 42(4), 559-578
(2011).

\bibitem {mac1}Mackenzie K.: Lie groupoids and Lie algebroids in differential geometry,
London Mathematical Society Lecture Note Series, vol. 124, Cambridge
University Press, Cambridge, (1987).

\bibitem {vaisman1}Vaisman, I.: Lectures on the geometry of Poisson manifolds, Progress in
Math., vol. 118, Birkh\"{a}user Verlag, Boston, (1994).

\bibitem {bursztyn}Bursztyn, H., Crainic, M., Weinstein, A., Zhu, C.: Integration of twisted Dirac brackets, Duke Math.
J., 123, 549-607, (2004).




\end{thebibliography}
\end{document}